\newtheorem{thm}{Theorem}
\theoremstyle{definition}
\newtheorem{dfn}[thm]{Definiton}
\theoremstyle{remark}
\newtheorem*{rem}{Remark}
\newcommand{\set}[1]{\{#1\}}
\newcommand{\Bigset}[1]{\Bigl\{#1\Bigr\}}
\newcommand{\bigprn}[1]{\bigl(#1\bigr)}
\newcommand{\Bigprn}[1]{\Bigl(#1\Bigr)}
\newcommand{\de}{\delta}
\newcommand{\De}{\Delta}
\newcommand{\vp}{\varphi}
\newcommand{\e}{\varepsilon}
\newcommand{\N}{\mathbb{N}}
\newcommand{\Z}{\mathbb{Z}}
\newcommand{\R}{\mathbb{R}}
\newcommand{\PSI}{\mathbf{\Psi}}
\begin{document}

\title{A fixed point theorem for contractive mappings that characterizes metric completeness}

\author{Mortaza Abtahi
\footnote{Address: \textit{School of Mathematics and Computer Sciences,
Damghan University, Damghan, P.O.BOX 36715-364, Iran}. Email: \texttt{abtahi@du.ac.ir}}}

\maketitle

\begin{abstract}
Inspired by the work of Suzuki in [Proc. Amer. Math. Soc. \textbf{136} (2008), 1861--1869]
we prove a fixed point theorem for contractive mappings
that generalizes a theorem of Geraghty in [Proc. Amer. Math. Soc., \textbf{40} (1973), 604--608]
and characterizes metric completeness.%
\footnote{MSC 2010: Primary 54H25, Secondary 54E50\\
\textit{Keywords}: Banach contraction principle, Contractive mappings,
Fixed points, Metric completeness}
\end{abstract}

\section{Introduction}

Throughout this paper, we denote by $\N$ the set of all positive integers,
by $\Z^+$ the set of nonnegative integers, and
by $\R^+$ the set of nonnegative real numbers.
Given a set $X$ and a mapping $T:X\to X$, the $n$th iterate of
$T$ is denoted by $T^n$ so that $T^2x=T(Tx)$, $T^3x=T(T^2x)$ and so on.
A point $x\in X$ is called a \emph{fixed point} of $T$ if $T(x)=x$.

Let $(X,d\,)$ be a metric space. A mapping $T:X\to X$ is called  a \emph{contraction}
if there is $r\in[0,1)$ such that
\begin{equation}\label{eqn:Banach-contraction}
  \forall\, x,y\in X, \quad d(Tx,Ty)\leq rd(x,y).
\end{equation}
The following famous theorem is referred to as the Banach contraction
principle.

\begin{thm}[Banach, \cite{Banach}]
\label{thm:Banach}
  If $(X, d\,)$ is a complete metric space, then every contraction $T$ on $X$
  has a unique fixed point.
\end{thm}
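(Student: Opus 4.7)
The plan is to follow the standard Picard iteration strategy. Fix an arbitrary $x_0\in X$ and define the orbit $x_n=T^n x_0$. I would first establish the estimate $d(x_{n+1},x_n)\le r^n d(x_1,x_0)$ by induction on $n$, using the contraction inequality \eqref{eqn:Banach-contraction} at each step.

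Next, I would show that $(x_n)$ is a Cauchy sequence. For $m>n$, the triangle inequality gives $d(x_m,x_n)\le\sum_{k=n}^{m-1}d(x_{k+1},x_k)\le d(x_1,x_0)\sum_{k=n}^{\infty}r^k=\dfrac{r^n}{1-r}d(x_1,x_0)$, which tends to $0$ as $n\to\infty$ since $r\in[0,1)$. This is the main technical step, though it is routine; the key observation is that $r<1$ forces a convergent geometric series, so the argument fails without strict contraction.

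By completeness of $(X,d)$, the sequence converges to some $x^*\in X$. To see $Tx^*=x^*$, note that $T$ is continuous (indeed Lipschitz with constant $r$), so $Tx^*=\lim_n T x_n=\lim_n x_{n+1}=x^*$. Alternatively, one can bound $d(Tx^*,x^*)\le d(Tx^*,x_{n+1})+d(x_{n+1},x^*)\le r\,d(x^*,x_n)+d(x_{n+1},x^*)\to 0$.

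For uniqueness, suppose $Tx=x$ and $Ty=y$. Then $d(x,y)=d(Tx,Ty)\le r\,d(x,y)$, so $(1-r)d(x,y)\le 0$, forcing $d(x,y)=0$ and hence $x=y$. The only place that demands care is the Cauchy estimate, but even that reduces to summing a geometric series; there is no serious obstacle in this classical argument, which is why the theorem serves as the baseline for the more delicate generalizations discussed later in the paper.
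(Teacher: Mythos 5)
Your proof is correct and complete: the Picard iteration, the geometric-series Cauchy estimate, passage to the limit via continuity of $T$, and the uniqueness argument are all sound. The paper itself states this theorem as a classical result cited from Banach's original work and supplies no proof, so there is nothing to compare against; your argument is exactly the standard one the citation refers to.
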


The Banach fixed point theorem is very simple and powerful. It became
a classical tool in nonlinear analysis with many generalizations;
see
\cite{Boyd-Wong,
Caristi-1976,
Ciric-1974,
Ekeland-1974,
Kirk-2003,
Meir-Keeler,
Nadler-1969,
Subrahmanyam-1974,
Suzuki-2001,
Suzuki-2004,
Suzuki-2005,
Suzuki-2008,
Suzuki-NA-2009}.
For instance, the following result due to Boyd and Wong
is a great generalization of Theorem \ref{thm:Banach}.

\begin{thm}[{Boyd and Wong, \cite{Boyd-Wong}}]
\label{thm:Boyd-Wong}
  Let $(X,d\,)$ be a complete metric space, and let $T$
  be a mapping on $X$. Assume there exists a right-continuous function
  $\vp:\R^+\to\R^+$ such that $\vp(s)<s$ for $s>0$, and
  \begin{equation}\label{eqn:Boyd-Wong}
    \forall\, x,y\in X,\quad d(Tx,Ty) \leq \vp(d(x,y)).
  \end{equation}
  Then $T$ has a unique fixed point.
\end{thm}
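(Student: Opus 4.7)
The plan is to follow the classical Picard iteration scheme. Pick an arbitrary $x_0 \in X$ and define $x_{n+1} = Tx_n$. Write $d_n = d(x_n, x_{n+1})$. Uniqueness is immediate: if $p, q$ were distinct fixed points, then $d(p,q) = d(Tp,Tq) \leq \vp(d(p,q)) < d(p,q)$, a contradiction. So the entire work lies in establishing existence.

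The first step is to prove $d_n \to 0$. Since $d_{n+1} = d(Tx_n, Tx_{n+1}) \leq \vp(d_n) < d_n$ whenever $d_n > 0$, the sequence $(d_n)$ is strictly decreasing (or eventually zero, in which case we are done) and hence converges to some $L \geq 0$. Applying right-continuity of $\vp$ at $L$ along the sequence $d_n \downarrow L$ gives
\[
L = \lim_{n\to\infty} d_{n+1} \leq \lim_{n\to\infty} \vp(d_n) = \vp(L),
\]
which forces $L = 0$ because otherwise $\vp(L) < L$.

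The main obstacle is to upgrade ``consecutive distances tend to zero'' to ``the sequence is Cauchy,'' since $\vp$ is only assumed right-continuous and can be quite wild. I would argue by contradiction: suppose there exist $\e > 0$ and subsequences with $d(x_{m_k}, x_{n_k}) \geq \e$ and $m_k < n_k$. For each $k$ choose $n_k$ to be the \emph{least} index past $m_k$ for which this happens; then $d(x_{m_k}, x_{n_k - 1}) < \e$, and the triangle inequality combined with $d_{n_k - 1} \to 0$ yields $d(x_{m_k}, x_{n_k}) \to \e^+$, i.e.\ the sequence approaches $\e$ from above. Now apply the contraction to get
\[
d(x_{m_k}, x_{n_k}) \leq d_{m_k} + d(x_{m_k+1}, x_{n_k+1}) + d_{n_k} \leq d_{m_k} + \vp\bigprn{d(x_{m_k}, x_{n_k})} + d_{n_k}.
\]
Passing to the limit and exploiting right-continuity of $\vp$ at $\e$ gives $\e \leq \vp(\e) < \e$, the desired contradiction.

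Completeness of $X$ then furnishes a limit $x^* = \lim x_n$. To see that $Tx^* = x^*$, estimate
\[
d(Tx^*, x^*) \leq d(Tx^*, Tx_n) + d(x_{n+1}, x^*) \leq \vp(d(x^*, x_n)) + d(x_{n+1}, x^*),
\]
and observe that $\vp(d(x^*, x_n)) < d(x^*, x_n) \to 0$, so the right-hand side vanishes. I expect the Cauchy step to be the only real subtlety; everything else is the standard monotonicity-plus-continuity bookkeeping familiar from the Banach principle.
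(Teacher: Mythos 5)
The paper only quotes this theorem from Boyd and Wong and gives no proof of it, so there is nothing internal to compare against; your argument is the classical Boyd--Wong proof (monotone decrease of $d_n$, the minimal-index Cauchy contradiction using right-continuity at $\e$, then passage to the limit) and it is correct. One cosmetic point: in the last step, use $d(Tx^*,Tx_n)\le\vp(d(x^*,x_n))<d(x^*,x_n)$ only when $d(x^*,x_n)>0$ (if $x_n=x^*$ the term $d(Tx^*,Tx_n)$ is zero anyway), since nothing is assumed about $\vp(0)$.
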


There is an example of an incomplete metric space $X$ on which
every contraction has a fixed point, \cite{Connell-1959}.
This means that Theorem \ref{thm:Banach}
cannot characterize the metric completeness of $X$.
Recently, Suzuki in \cite{Suzuki-2008} proved the following
remarkable generalization of the classical Banach contraction theorem
that characterizes the metric completeness of $X$.

\begin{thm}[Suzuki, \cite{Suzuki-2008}]
\label{thm:Suzuki-2008}
  Define a function $\theta:[0, 1) \to (1/2, 1]$ by
  \begin{equation*}
    \theta(r)=
      \left\{
        \begin{array}{ll}
          1, & \hbox{if $0\leq r \leq (\sqrt5-1)/2$;} \\
          (1-r)r^{-2}, & \hbox{if $(\sqrt5-1)/2 \leq r \leq 1/\sqrt2$;} \\
          (1+r)^{-1}, & \hbox{if $1/\sqrt2 \leq r <1$.}
        \end{array}
      \right.
  \end{equation*}
  Let $(X,d\,)$ be a metric space. Then $X$ is complete if and only if
  every mapping $T$ on $X$ satisfying the following has a fixed point:
    \begin{itemize}
      \item  There exists $r\in[0, 1)$ such that
             \begin{equation}\label{eqn:suzuki-condition}
               \forall\, x,y\in X\
               \bigprn{\theta(r)d(x,Tx) \leq d(x,y) \ \Longrightarrow \
               d(Tx,Ty) \leq r d(x,y)}.
             \end{equation}
    \end{itemize}
\end{thm}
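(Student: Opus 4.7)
The plan is to prove the two implications in turn. For ``$\Rightarrow$'' I would run a Picard iteration and extract the fixed point from its limit, as in Banach's theorem, using (\ref{eqn:suzuki-condition}) in place of ordinary contractivity. For ``$\Leftarrow$'' I would assume $X$ is incomplete and manufacture a self-map satisfying (\ref{eqn:suzuki-condition}) but lacking a fixed point, contradicting the hypothesis.

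For sufficiency, let $(X,d)$ be complete and $T$ satisfy (\ref{eqn:suzuki-condition}) for some $r\in[0,1)$. Fix $x_0\in X$ and set $x_n=T^n x_0$. Since $\theta(r)\le 1$, the premise of (\ref{eqn:suzuki-condition}) with $y=x_{n+1}$ is automatic, so $d(x_{n+1},x_{n+2})\le r\,d(x_n,x_{n+1})$; iterating, $d(x_n,x_{n+1})\le r^n d(x_0,x_1)$ and $\{x_n\}$ is Cauchy with limit $z\in X$. As an intermediate step I would show that for every $x\ne z$,
\[
 d(z,Tx)\le r\,d(z,x),
\]
by noting that $d(x_n,x_{n+1})\to 0$ while $d(x_n,x)\to d(z,x)>0$, so $\theta(r)d(x_n,x_{n+1})\le d(x_n,x)$ eventually; then (\ref{eqn:suzuki-condition}) gives $d(x_{n+1},Tx)\le r\,d(x_n,x)$ and one passes to the limit.

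The main obstacle is ruling out $Tz\ne z$. The bound above shows that if $Tz\ne z$ then $T^k z\to z$ geometrically, but this alone does not force $Tz=z$. Here the precise form of $\theta(r)$ is essential, and I would argue by cases on which of the three intervals in the definition of $\theta$ contains $r$. In each case I would select particular test points drawn from $\{z,Tz,T^2 z,x_n\}$ and combine the resulting instances of (\ref{eqn:suzuki-condition}) with the triangle inequality; the break-points $(\sqrt{5}-1)/2$ and $1/\sqrt{2}$ should emerge from optimizing these combinations so that the assumption $Tz\ne z$ contradicts the chosen $\theta(r)$.

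For necessity, suppose $X$ is not complete. Choose a non-convergent Cauchy sequence and thin it to $\{u_n\}$ whose consecutive distances decay geometrically at an arbitrarily small rate $\rho<1$. Define $T:X\to X$ so that $Tx$ is the first $u_m$ lying sufficiently ``ahead'' of $x$ relative to the sequence. Every Picard orbit of $T$ is then a tail of $\{u_n\}$, hence non-convergent in $X$, so $T$ has no fixed point. A routine case analysis on the positions of $x$ and $y$ within $\{u_n\}$ verifies that $T$ satisfies (\ref{eqn:suzuki-condition}) with $r=\rho$ (chosen small enough that $\theta(r)=1$), contradicting the hypothesis and forcing completeness of $X$.
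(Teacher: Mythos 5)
The paper never proves this theorem; it is quoted verbatim from Suzuki \cite{Suzuki-2008}, so your sketch has to be measured against Suzuki's original argument and against the related constructions that this paper does carry out (the either--or trick in the proof of Theorem \ref{thm:Geraghty-1} and the $\rho$-construction in the proof of Theorem \ref{thm:Completeness @ Geraghty}). In the sufficiency half your preliminary steps are sound: since $\theta(r)\leq 1$ the Picard iterates satisfy $d(x_{n+1},x_{n+2})\leq r\,d(x_n,x_{n+1})$, hence converge to some $z$, and the auxiliary estimate $d(Tx,z)\leq r\,d(x,z)$ for $x\neq z$ is exactly Suzuki's intermediate lemma. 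But the step you defer --- ruling out $Tz\neq z$ --- is the entire content of the specific function $\theta$, and ``the break-points $(\sqrt5-1)/2$ and $1/\sqrt2$ should emerge from optimizing these combinations'' is a statement of intent, not an argument. In Suzuki's proof this is a genuinely delicate case analysis: for $r$ in the third interval one needs the dichotomy that for every $n$ at least one of $\theta(r)d(x_n,Tx_n)\leq d(x_n,z)$ or $\theta(r)d(Tx_n,T^2x_n)\leq d(Tx_n,z)$ holds (the analogue of the either--or step \eqref{eqn:either-or-I}--\eqref{eqn:either-or-II} in Theorem \ref{thm:Geraghty-1}), while for the first two intervals one assumes $Tz\neq z$, tracks the orbit $T^nz$ by induction, and exploits the quadratic inequalities encoded in the break-points. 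None of this is present in your proposal, so the sufficiency direction has a real gap, located precisely where the theorem is hard.

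The necessity half is the right idea but under-specified in a way that matters. Your $T$ must be defined on all of $X$, and condition \eqref{eqn:suzuki-condition} must be verified for arbitrary $x,y\in X$, not only for members of the Cauchy sequence; ``the first $u_m$ lying sufficiently ahead of $x$ relative to the sequence'' has no meaning for a general $x$, and ``a routine case analysis on the positions of $x$ and $y$ within $\{u_n\}$'' silently assumes $x,y$ lie in the sequence. The standard repair --- used both by Suzuki and in this paper's proof of Theorem \ref{thm:Completeness @ Geraghty} --- is to introduce $\rho(x)=\lim_n d(x,u_n)$, which is strictly positive because $\{u_n\}$ does not converge and satisfies $\rho(x)-\rho(y)\leq d(x,y)\leq\rho(x)+\rho(y)$, and to set $Tx=u_m$ where $\rho(u_n)$ is a small fraction of $\rho(x)$ for $n\geq m$ (the paper uses the factor $1/7$); then $\rho(Tx)<\rho(x)$ gives $Tx\neq x$, and the two-case comparison of $\rho(x)$ with $\rho(y)$ yields $d(Tx,Ty)\leq r\,d(x,y)$ with $r$ as small as you wish, so your plan of taking $\theta(r)=1$ does go through. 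Note also that the geometric decay of the consecutive gaps $d(u_n,u_{n+1})$, which you emphasize, is beside the point: for a general $x$ the relevant quantity is $\rho(x)$, not the gaps. As written, then, the proposal is a plausible outline whose necessity half can be completed by the $\rho$-device already in the paper, but whose sufficiency half omits the core of Suzuki's proof.
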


The above Suzuki's generalized version of Banach fixed point theorem
initiated a lot of work in this direction and
led to some important contribution in metric fixed point
theory. Several authors obtained
variations and refinements of Suzuki's result; see
\cite{Enjouji-Nakanishi-Suzuki-2009,
Kikkawa-Suzuki-2008,
Kikkawa-Suzuki-NA-2008,
Popescu-2009,
Singh-Mishra-2010,
Singh-Pathak-Mishra-2010}.

For a metric space $(X,d\,)$, a mapping $T:X\to X$ is called \emph{contractive} if
$d(Tx,Ty)<d(x,y)$, for all $x,y\in X$ with $x\neq y$.
Edelstein in \cite{Edelstein-62} proved that, on compact
metric spaces, every contractive mapping possesses a unique fixed
point theorem. Then in \cite{Suzuki-NA-2009} Suzuki generalized Edelstein's
result as follows. 

\begin{thm}[Suzuki, \cite{Suzuki-NA-2009}]
\label{thm:Suzuki-2009-NA}
  Let $(X,d\,)$ be a compact metric space and let $T$ be a mapping on X.
  Assume that
  \begin{equation}\label{eqn:suzuki-NA-2009}
    \forall\, x,y\in X\
    \Bigprn{\frac12 d(x,Tx) < d(x,y)\ \Longrightarrow\
    d(Tx,Ty)<d(x,y)}.
  \end{equation}
  Then $T$ has a unique fixed point.
\end{thm}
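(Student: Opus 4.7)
The plan is to establish uniqueness first, then existence in two stages: first show $\alpha := \inf_{x\in X} d(x,Tx) = 0$, then extract an actual fixed point from this information via compactness. Uniqueness is immediate: if $p\ne q$ were both fixed, then $\tfrac12 d(p,Tp)=0<d(p,q)$, so the hypothesis would yield $d(p,q)=d(Tp,Tq)<d(p,q)$.

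For the first stage I would take a sequence $(x_n)$ with $d(x_n,Tx_n)\to\alpha$ and, by compactness, pass to a subsequence along which $x_n\to z$ and $Tx_n\to w$; continuity of the metric gives $d(z,w)=\alpha$. Assuming $\alpha>0$, for all large $n$ the value $\tfrac12 d(x_n,Tx_n)$ is near $\alpha/2$ while $d(x_n,w)$ is near $\alpha$, so the hypothesis applies to $(x_n,w)$ and yields $d(Tx_n,Tw)<d(x_n,w)$. Letting $n\to\infty$ gives $d(w,Tw)\le\alpha$, which combined with $d(w,Tw)\ge\alpha$ forces $d(w,Tw)=\alpha>0$. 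But then applying the hypothesis to $(w,Tw)$ (valid since $\tfrac12 d(w,Tw)<d(w,Tw)$) gives $d(Tw,T^2w)<\alpha$, contradicting the definition of $\alpha$. Hence $\alpha=0$.

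For the second stage, take $x_n\to z$ with $d(x_n,Tx_n)\to 0$; this forces $Tx_n\to z$, and since the hypothesis applied to $(x_n,Tx_n)$ gives $d(Tx_n,T^2x_n)<d(x_n,Tx_n)\to 0$, also $T^2x_n\to z$. A direct attempt to apply the hypothesis to $(x_n,z)$ may fail because $d(x_n,z)$ might shrink faster than $\tfrac12 d(x_n,Tx_n)$. I would circumvent this with a dichotomy lemma: for every $x$ with $Tx\ne x$ and every $y\in X$, at least one of
\begin{equation*}
\tfrac12 d(x,Tx)<d(x,y) \quad\text{or}\quad \tfrac12 d(Tx,T^2x)<d(Tx,y)
\end{equation*}
must hold, since otherwise the triangle inequality would give $d(x,Tx)\le\tfrac12 d(x,Tx)+\tfrac12 d(Tx,T^2x)$, hence $d(x,Tx)\le d(Tx,T^2x)$, contradicting the hypothesis applied to $(x,Tx)$. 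Applying this dichotomy to the pairs $(x_n,z)$ and passing to a subsequence, one alternative holds for infinitely many $n$. In the first case, $d(Tx_n,Tz)<d(x_n,z)\to 0$ forces $d(z,Tz)=0$. In the second case, $d(T^2x_n,Tz)<d(Tx_n,z)\to 0$ with $T^2x_n\to z$ yields the same conclusion. Either way, $Tz=z$.

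The main obstacle is spotting the dichotomy lemma; it is precisely what allows the hypothesis to be invoked in the borderline regime where $d(x_n,z)$ and $\tfrac12 d(x_n,Tx_n)$ are of comparable size. Once this lemma is in hand, the entire proof reduces to careful passage to the limit in the strict contractive inequality together with continuity of the metric.
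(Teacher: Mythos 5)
Your proof is correct. Be aware, though, that the paper does not prove Theorem \ref{thm:Suzuki-2009-NA} at all: it is quoted from Suzuki \cite{Suzuki-NA-2009} as background. Your argument is essentially Suzuki's original one (uniqueness from the trivial antecedent at a fixed point; $\inf_x d(x,Tx)=0$ via compactness and the strict inequality at the limit pair; then a fixed point extracted from a minimizing sequence), and the ``dichotomy lemma'' you isolate --- for $Tx\neq x$, either $\tfrac12 d(x,Tx)<d(x,y)$ or $\tfrac12 d(Tx,T^2x)<d(Tx,y)$ --- is exactly the either-or device \eqref{eqn:either-or-I} that the present paper uses in the proof of Theorem \ref{thm:Geraghty-1}, so you have rediscovered the key trick rather than diverged from it. The only cosmetic point: when you apply the hypothesis to the pairs $(x_n,Tx_n)$ and the dichotomy to $(x_n,z)$ you implicitly assume $Tx_n\neq x_n$ for all $n$; this costs nothing, since if $Tx_n=x_n$ for some $n$ a fixed point already exists, but a careful write-up should say so explicitly.
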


\noindent
It is interesting to note that,
although the above Suzuki's theorem generalizes Edelstein's theorem
in \cite{Edelstein-62}, these two theorems, as Suzuki mentioned in
\cite{Suzuki-NA-2009}, are not of the same type.

Let $T$ be a contractive mapping on a metric space $X$.
Choose a point $x\in X$ and set $x_n=T^nx$, for $n\in\N$. Criteria
for the sequence of iterates $\set{x_n}$ to be Cauchy are of interest,
for if it is Cauchy then it converges to a unique fixed point
of $T$, \cite{Geraghty-73}. Many papers have presented such criteria,
especially since the important paper of Rakotch \cite{Rakotch-62}.
For example, Geraghty in \cite{Geraghty-73} proved the following
theorem that gives a necessary and sufficient condition for a sequence
of iterates to be convergent.

\begin{thm}[Geraghty, \cite{Geraghty-73}]
\label{thm:Geraghty}
  Let $X$ be a complete metric space and let $T$ be a contractive mapping
  on $X$. Let $x\in X$ and set $x_n=T^n x$, $n\in\N$. Then
  $x_n$ converges to a unique fixed point of $T$ if and only if
  for any two subsequences $\set{x_{p_n}}$ and $\set{x_{q_n}}$, with
  $x_{p_n}\neq x_{q_n}$, if $\De_n\to1$ then $\de_n\to0$, where
  \[
    \de_n=d(x_{p_n},x_{q_n}), \qquad
    \De_n=d(Tx_{p_n},Tx_{q_n})/\de_n.
  \]
\end{thm}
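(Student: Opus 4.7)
The plan is to handle the two directions separately, with all of the substance concentrated in the ``if'' direction. The ``only if'' direction is essentially free: if $x_n\to z$ with $Tz=z$, then for any pair of subsequences $\set{x_{p_n}}$, $\set{x_{q_n}}$ with $x_{p_n}\neq x_{q_n}$ one has $\de_n=d(x_{p_n},x_{q_n})\to0$ automatically, so the implication ``$\De_n\to 1\Rightarrow\de_n\to 0$'' holds trivially.

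For the ``if'' direction, I would first dispose of the degenerate case $x_{n_0}=x_{n_0+1}$ for some $n_0$, in which $x_{n_0}$ is already a fixed point and the iteration stabilizes. Otherwise $x_n\neq x_{n+1}$ for all $n$, and since $T$ is contractive the sequence $d(x_n,x_{n+1})$ is strictly decreasing, hence convergent to some $c\ge 0$. Apply the hypothesis to the pair $p_n=n$, $q_n=n+1$: if $c>0$ then $\De_n=d(x_{n+1},x_{n+2})/d(x_n,x_{n+1})\to c/c=1$, so the hypothesis forces $\de_n=d(x_n,x_{n+1})\to 0$, contradicting $c>0$. Therefore $d(x_n,x_{n+1})\to 0$.

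Next I would show $\set{x_n}$ is Cauchy by the standard contradiction. Suppose not; fix $\e>0$ and choose, for each $n$, indices $p_n<q_n$ with $q_n$ minimal subject to $d(x_{p_n},x_{q_n})\ge\e$. Then $d(x_{p_n},x_{q_n-1})<\e$, and a triangle sandwich together with $d(x_{q_n-1},x_{q_n})\to 0$ yields $\de_n=d(x_{p_n},x_{q_n})\to\e$. Another triangle inequality gives
\[
  \bigl|d(x_{p_n+1},x_{q_n+1})-d(x_{p_n},x_{q_n})\bigr|
  \le d(x_{p_n},x_{p_n+1})+d(x_{q_n},x_{q_n+1})\to 0,
\]
so $d(Tx_{p_n},Tx_{q_n})\to\e$ as well, and therefore $\De_n\to 1$. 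By hypothesis $\de_n\to 0$, contradicting $\de_n\to\e>0$. Completeness of $X$ then provides a limit $z$; since any contractive mapping is continuous, $Tz=\lim Tx_n=\lim x_{n+1}=z$, and uniqueness is immediate because two distinct fixed points would violate contractivity.

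The main obstacle is the Cauchy step: the indices $p_n,q_n$ must be selected so that $\de_n$ actually \emph{converges} to $\e$, not merely stays bounded below by it; only with that sharper control can the ratio $\De_n$ be squeezed to $1$ and the hypothesis be triggered. The rest is bookkeeping around the contractive property.
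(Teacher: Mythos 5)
Your argument is correct, and it is essentially the argument this paper uses for its own Suzuki-type generalization of Geraghty's theorem (the paper never proves the quoted Theorem~\ref{thm:Geraghty} directly, but the proofs of Theorems~\ref{thm:generalized-contractive-map} and~\ref{thm:Geraghty-1} follow the same skeleton): trivial ``only if'' direction, the degenerate case $x_m=Tx_m$, strict decrease of $d(x_n,x_{n+1})$ with the pair $p_n=n$, $q_n=n+1$ forcing the limit to be $0$, and then the minimal-index non-Cauchy contradiction in which $\de_n\to\e$ and $\De_n$ is squeezed to $1$ by exactly the triangle estimate you use (the paper writes it as $(\de_n-2/n)/\de_n\leq\De_n<1$ after fixing $p_n\geq k_n$). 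Two remarks. First, the one bookkeeping point you flag should indeed be made explicit: you must force $p_n\to\infty$ (e.g.\ $p_n\geq k_n$ with $d(x_m,x_{m+1})<1/n$ for $m\geq k_n$, as in the paper) so that $d(x_{q_n-1},x_{q_n})$, $d(x_{p_n},x_{p_n+1})$ and $d(x_{q_n},x_{q_n+1})$ all tend to $0$; with that choice your sandwich and your $\De_n\to1$ estimate are airtight, and $x_{p_n}\neq x_{q_n}$ holds since $\de_n\geq\e$. Second, the only genuine divergence from the paper is the final step: you may legitimately conclude $Tz=z$ from continuity, since a contractive map is nonexpansive, whereas the paper's generalization only assumes the conditional hypothesis \eqref{eqn:suzuki-type-contractive-map} and therefore needs the ``either--or'' argument \eqref{eqn:either-or-I}--\eqref{eqn:either-or-II} to produce a subsequence converging to $Tz$; your shortcut buys simplicity but would not survive the weakening of the hypothesis, which is precisely what the paper is after. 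Note also that your subsequence condition is Geraghty's original one ($x_{p_n}\neq x_{q_n}$), which is formally stronger as a hypothesis than the paper's condition restricted to pairs with $d(x_{p_n},Tx_{p_n})\leq d(x_{p_n},x_{q_n})$, and your proof only ever invokes it for pairs with $\de_n$ bounded away from $0$, so no issue arises.
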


Motivated by the works of Suzuki in \cite{Suzuki-2008} and \cite{Suzuki-NA-2009},
we prove a fixed point theorem for contractive mappings based on
Theorem \ref{thm:Geraghty} that characterizes metric completeness.

\section{Fixed Point Theorem}

Let $(X,d\,)$ be a metric space. We shall use the following notation:
for any pair of subsequences $\set{x_{p_n}}$
and $\set{x_{q_n}}$ of a given sequence $\set{x_n}$
in $X$, we let $\de_n=d(x_{p_n},x_{q_n})$ and
\[
  \De_n=
  \left\{\!\!
    \begin{array}{ll}
      0, & \hbox{$\de_n=0$;} \\
      d(Tx_{p_n},Tx_{q_n})/\de_n, & \hbox{$\de_n>0$.}
    \end{array}
  \right.
\]

\begin{thm}\label{thm:generalized-contractive-map}
  Let $(X,d)$ be a metric space and let a mapping $T:X\to X$ satisfy
  the following condition:
  \begin{equation}\label{eqn:generalized-contractive-map}
    \forall\, x,y\in X\
    \Bigprn{x\neq y,\ d(x,Tx) \leq d(x,y)\ \Longrightarrow\
    d(Tx,Ty) < d(x,y)}.
  \end{equation}

  \noindent
  Given $x\in X$, the following statements for the sequence $x_n=T^n x$, $n\in\N$,
  are equivalent:
  \begin{enumerate}[\upshape(i)]
    \item\label{item:1-pre-main}
    $\set{x_n}$ is a Cauchy sequence.

    \item\label{item:2-pre-main}
    For any two subsequences $\set{x_{p_n}}$ and $\set{x_{q_n}}$, with
    $d(x_{p_n},Tx_{p_n}) \leq d(x_{p_n},x_{q_n})$ for all $n$, if
    $\De_n\to1$ then $\de_n\to0$.
  \end{enumerate}
\end{thm}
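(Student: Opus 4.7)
The plan is to establish the two directions separately. The implication (i) $\Rightarrow$ (ii) is immediate: if $\set{x_n}$ is Cauchy then $\de_n = d(x_{p_n}, x_{q_n}) \to 0$ for any choice of subsequences, so the conclusion of (ii) holds regardless of the behavior of $\De_n$.

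For the substantive direction (ii) $\Rightarrow$ (i), I would first dispose of the degenerate case in which $Tx_n = x_n$ for some $n$; then $\set{x_n}$ is eventually constant and hence Cauchy. Assume henceforth that $x_n \neq x_{n+1}$ for every $n$. Applying (\ref{eqn:generalized-contractive-map}) to the pair $(x_n, x_{n+1})$---for which $d(x_n, Tx_n) = d(x_n, x_{n+1})$ trivially---makes $\set{d(x_n, x_{n+1})}$ strictly decreasing, so it converges to some $L \geq 0$. To show $L = 0$, I would invoke (ii) with the adjacent choice $p_n = n$, $q_n = n+1$: the premise $d(x_{p_n}, Tx_{p_n}) \leq d(x_{p_n}, x_{q_n})$ holds with equality, and if $L > 0$ then $\De_n = d(x_{n+1}, x_{n+2})/d(x_n, x_{n+1}) \to L/L = 1$, whence (ii) would force $\de_n = d(x_n, x_{n+1}) \to 0$, contradicting $L > 0$.

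The more delicate step is to derive a contradiction from the assumption that $\set{x_n}$ is not Cauchy, using the just-established fact $d(x_n, x_{n+1}) \to 0$. I would use the standard Edelstein--Geraghty selection: fix $\e > 0$ witnessing non-Cauchyness, and for each $k$ choose $p_k < q_k$ with $p_k \to \infty$ such that $q_k$ is the \emph{smallest} integer exceeding $p_k$ for which $d(x_{p_k}, x_{q_k}) \geq \e$. The minimality of $q_k$ combined with the triangle inequality pins down $\de_k = d(x_{p_k}, x_{q_k}) \to \e$, and a second triangle inequality $|d(x_{p_k+1}, x_{q_k+1}) - d(x_{p_k}, x_{q_k})| \leq d(x_{p_k}, x_{p_k+1}) + d(x_{q_k}, x_{q_k+1})$ gives $\De_k \to 1$. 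The premise $d(x_{p_k}, Tx_{p_k}) \leq d(x_{p_k}, x_{q_k})$ holds for all large $k$ because its left side tends to $0$ while its right tends to $\e > 0$, so after passing to a tail (ii) would force $\de_k \to 0$, contradicting $\de_k \to \e$.

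The main subtlety, I expect, is combinatorial rather than analytical: one must arrange both that the premise $d(x_{p_k}, Tx_{p_k}) \leq d(x_{p_k}, x_{q_k})$ holds and that $\De_k$ actually converges to $1$ (not merely has $1$ as a cluster point), so that (ii) produces a contradiction. The minimal-index selection of $q_k$ is the key device that achieves both, by sandwiching $\de_k$ between $\e$ and $\e + d(x_{q_k - 1}, x_{q_k})$; the contractive condition (\ref{eqn:generalized-contractive-map}) itself enters only in the preparatory step that makes consecutive distances strictly decreasing.
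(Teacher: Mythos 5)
Your proposal is correct and follows essentially the same route as the paper's proof: same split into the fixed-point/degenerate case, the same use of the adjacent pair $p_n=n$, $q_n=n+1$ to force $d(x_n,x_{n+1})\to0$, and the same minimal-index selection to pin $\de_k\to\e$ in the non-Cauchy argument. The only cosmetic difference is in the last step, where you get $\De_k\to1$ from the two-sided triangle inequality alone (so condition \eqref{eqn:generalized-contractive-map} is not needed there), while the paper uses the contractive condition for the upper bound $\De_k<1$ and the triangle inequality only for the lower bound; both versions work.
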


\begin{proof}
  The implication \eqref{item:1-pre-main} $\Rightarrow$ \eqref{item:2-pre-main} is
  clear because $\set{x_n}$ is a Cauchy sequence and thus for any two subsequences
  $\set{x_{p_n}}$ and $\set{x_{q_n}}$ we always have $\de_n\to0$.

  We now prove \eqref{item:2-pre-main} $\Rightarrow$ \eqref{item:1-pre-main}.
  First, assume that $x_m=Tx_m$, for some $m$. Then $x_n=x_m$, for
  $n\geq m$, and particularly $\set{x_n}$ is a Cauchy sequence.
  Next, assume that $x_n\neq x_{n+1}$ for all $n$.
  Since $d(x_n,Tx_n) \leq d(x_n,Tx_n)$,
  condition \eqref{eqn:generalized-contractive-map} implies
  that the sequence $\de_n=d(x_n,x_{n+1})$ is strictly decreasing.
  Thus $\de_n\to\de$ for some nonnegative number $\de$.
  If $\de>0$, take $p_n=n$ and $q_n=n+1$.
  Then $d(x_{p_n},Tx_{p_n}) \leq d(x_{p_n},x_{q_n})$, for all  $n$, and
  $\De_n\to1$ while $\de_n\to\de\neq0$. This is a contradiction and hence
  $d(x_n,x_{n+1})\to0$.

  For every $n\in\N$, choose $k_n\in\N$ such that
  $d(x_m,x_{m+1})<1/n$ for $m\geq k_n$. If $\set{x_n}$ is not a Cauchy sequence,
  there exist $\e>0$ and sequences $\set{p_n}$ and $\set{q_n}$ of positive integers
  such that $q_n>p_n\geq k_n$ and $d(x_{p_n},x_{q_n})\geq\e$. We also
  assume that $q_n$ is the least such integer so that
  $d(x_{p_n},x_{q_n-1})<\e$. Therefore,
  \[
    \e \leq d(x_{p_n},x_{q_n})
       \leq d(x_{p_n},x_{q_n-1})+d(x_{q_n-1},x_{q_n}) < \e+1/n.
  \]

  \noindent
  This shows that $\de_n\to\e$. Since we have, for every  $n\in\N$,
  \[
    d(x_{p_n},Tx_{p_n}) \leq d(x_{p_n},x_{q_n})
     < d(x_{p_n},x_{q_n}),
  \]
  condition \eqref{eqn:generalized-contractive-map} shows that
  $d(Tx_{p_n},Tx_{q_n})<\de_n$. So
  \[
    \frac{\de_n-2/n}{\de_n} \leq
    \frac{d(Tx_{p_n},Tx_{q_n})}{\de_n} =\De_n<1.
  \]
  It shows that $\De_n\to1$ and thus $\de_n\to0$. This is a contradiction.
  Therefore, $\set{x_n}$ is a Cauchy sequence.
\end{proof}

The following is a Susuki-type generalization of Theorem \ref{thm:Geraghty}.

\begin{thm}\label{thm:Geraghty-1}
  Let $X$ be a complete metric space and let $T$ be a mapping on $X$ satisfying
  the following condition:
  \begin{equation}\label{eqn:suzuki-type-contractive-map}
    \forall\, x,y\in X\
    \Bigprn{\frac12d(x,Tx) < d(x,y)\ \Longrightarrow\
    d(Tx,Ty) < d(x,y)}.
  \end{equation}

  \noindent
  Given $x\in X$, the following statements for the sequence $x_n=T^n x$, $n\in\N$,
  are equivalent:
  \begin{enumerate}[\upshape(i)]
    \item\label{item:1-main}
    $x_n\to z$ in $X$, with $z$ a unique fixed point of $T$;

    \item\label{item:2-main}
    for any two subsequences $\set{x_{p_n}}$ and $\set{x_{q_n}}$, with
    $d(x_{p_n},Tx_{p_n}) \leq d(x_{p_n},x_{q_n})$ for all $n$, if
    $\De_n\to1$ then $\de_n\to0$.
  \end{enumerate}
\end{thm}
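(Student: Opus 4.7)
The plan is to bootstrap off of Theorem \ref{thm:generalized-contractive-map}: I would first verify that the Suzuki-type hypothesis \eqref{eqn:suzuki-type-contractive-map} implies the hypothesis \eqref{eqn:generalized-contractive-map} of that theorem. Indeed, if $x\neq y$ and $d(x,Tx)\leq d(x,y)$, then either $d(x,Tx)>0$, in which case $\tfrac12 d(x,Tx)<d(x,Tx)\leq d(x,y)$, or $d(x,Tx)=0$, in which case $\tfrac12 d(x,Tx)=0<d(x,y)$ since $x\neq y$. Either way the premise of \eqref{eqn:suzuki-type-contractive-map} is met, giving $d(Tx,Ty)<d(x,y)$. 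So Theorem \ref{thm:generalized-contractive-map} applies directly to $T$.

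For (i)$\Rightarrow$(ii), if $x_n\to z$ then $\de_n\to0$ for any pair of subsequences, so (ii) is vacuous. For (ii)$\Rightarrow$(i), Theorem \ref{thm:generalized-contractive-map} together with the observation above yields that $\set{x_n}$ is Cauchy, hence convergent to some $z\in X$ by completeness of $X$. The content is then to prove $Tz=z$. The trivial case $x_m=x_{m+1}$ for some $m$ is immediate, so assume $x_n\neq x_{n+1}$ for all $n$; as in the proof of Theorem \ref{thm:generalized-contractive-map}, the sequence $d(x_n,x_{n+1})$ is strictly decreasing.

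The key step, and the main obstacle, is to show that $\tfrac12 d(x_n,Tx_n)<d(x_n,z)$ holds for infinitely many $n$, so that \eqref{eqn:suzuki-type-contractive-map} supplies $d(Tx_n,Tz)<d(x_n,z)$ along this subsequence. I would argue by contradiction: if both $\tfrac12 d(x_n,x_{n+1})\geq d(x_n,z)$ and $\tfrac12 d(x_{n+1},x_{n+2})\geq d(x_{n+1},z)$ held for some $n$, then the triangle inequality $d(x_n,x_{n+1})\leq d(x_n,z)+d(z,x_{n+1})$ would give $d(x_n,x_{n+1})\leq d(x_{n+1},x_{n+2})$, contradicting strict monotonicity. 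So infinitely many $n$ satisfy the desired inequality, and along that subsequence $d(Tx_n,Tz)<d(x_n,z)\to0$; since $Tx_n=x_{n+1}\to z$ as well, we obtain $Tz=z$.

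Finally, for uniqueness, if $Tw=w$ with $w\neq z$, then $\tfrac12 d(z,Tz)=0<d(z,w)$ triggers \eqref{eqn:suzuki-type-contractive-map} to give the absurd $d(z,w)=d(Tz,Tw)<d(z,w)$. This closes the equivalence. The only delicate point in the whole argument is the triangle-inequality/monotonicity interplay used to locate infinitely many indices $n$ at which the Suzuki premise $\tfrac12 d(x_n,Tx_n)<d(x_n,z)$ is valid; everything else is either routine or already packaged into Theorem \ref{thm:generalized-contractive-map}.
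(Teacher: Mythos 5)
Your proposal is correct and follows essentially the same route as the paper: reduce to Theorem \ref{thm:generalized-contractive-map} to get that $\set{x_n}$ is Cauchy, then use the ``for each $n$ at least one of the two Suzuki premises $\tfrac12 d(x_n,Tx_n)<d(x_n,z)$, $\tfrac12 d(x_{n+1},Tx_{n+1})<d(x_{n+1},z)$ holds'' dichotomy (via the triangle inequality and strict decrease of $d(x_n,x_{n+1})$) to extract a subsequence with $Tx_n\to Tz$, forcing $Tz=z$, plus the easy uniqueness argument. Your explicit verification that \eqref{eqn:suzuki-type-contractive-map} implies \eqref{eqn:generalized-contractive-map} is a small point the paper leaves implicit, but the substance is identical.
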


\begin{proof}
  First, let us prove that $T$ has at most one fixed point. If $z$ is
  a fixed point of $T$ and $z\neq y$ then $(1/2)d(z,Tz)<d(z,y)$ and condition
  \eqref{eqn:suzuki-type-contractive-map} implies that
  $d(Tz,Ty)<d(z,y)$. Since $Tz=z$, we must have $Ty\neq y$, i.e., $y$ is not a fixed point of $T$.

  The implication \eqref{item:1-main} $\Rightarrow$ \eqref{item:2-main} is
  clear. We prove \eqref{item:2-main} $\Rightarrow$ \eqref{item:1-main}.
  By Theorem \ref{eqn:generalized-contractive-map}, the sequence
  $\set{x_n}$ is Cauchy and, since the metric space $X$ is complete,
  $x_n\to z$ for some $z\in X$.
  We show that $Tz=z$. First note that,
  \begin{equation}\label{eqn:either-or-I}
    \forall n\
    \bigprn{d(x_n,x_{n+1}) < 2d(x_n,z)
    \quad\text{or}\quad
    d(x_{n+1},x_{n+2}) < 2d(x_{n+1},z)}.
  \end{equation}

  \noindent
  In fact, if $2d(x_n,z) \leq d(x_n,x_{n+1})$ and
  $2d(x_{n+1},z) \leq d(x_{n+1},x_{n+2})$ hold, for some $n$,
  then
  \begin{align*}
    2d(x_n,x_{n+1})
     & \leq 2d(x_n,z) + 2d(x_{n+1},z) \\
     & \leq d(x_n,x_{n+1}) + d(x_{n+1},x_{n+2})\\
     & < d(x_n,x_{n+1}) + d(x_n,x_{n+1})=2d(x_n,x_{n+1}).
  \end{align*}
  This is absurd and thus \eqref{eqn:either-or-I} must hold.
  Now condition \eqref{eqn:suzuki-type-contractive-map} together
  with \eqref{eqn:either-or-I} imply that
  \begin{equation}\label{eqn:either-or-II}
  \begin{split}
    \forall n\
    \bigprn{d(x_{n+1},Tz) < d(x_n,z)
    \quad\text{or}\quad
    d(x_{n+2},Tz) < d(x_{n+1},z)}.
  \end{split}
  \end{equation}

  \noindent
  Since $x_n\to z$, condition \eqref{eqn:either-or-II} implies
  the existence of a subsequence of $\set{x_n}$ that converges
  to $Tz$. This shows that $Tz = z$.
\end{proof}

\begin{rem}
  In Theorem \ref{thm:Geraghty-1}, if we replace condition
  \eqref{eqn:suzuki-type-contractive-map} with condition
  \eqref{eqn:suzuki-type-contractive-map-rem} below,
  we are still able to prove the theorem except for the uniqueness of
  the fixed point;
  \begin{equation}\label{eqn:suzuki-type-contractive-map-rem}
    \forall\, x,y\in X\
    \Bigprn{\frac12d(x,Tx) \leq d(x,y)\ \Longrightarrow\
    d(Tx,Ty) \leq d(x,y)},
  \end{equation}
\end{rem}

We next prove that $1/2$ in condition \eqref{eqn:suzuki-type-contractive-map}
of Theorem \ref{thm:Geraghty-1} is the best constant.

\begin{thm}\label{thm:1/2 is the best constant in Geraghty-1}
  For every $\eta\in(1/2,\infty)$, there exist a complete metric space $(X,d\,)$
  and a mapping $T:X\to X$ with the following properties:
  \begin{enumerate}[\upshape(i)]
    \item \label{item:best-const-1}
    the mapping $T$ has no fixed point in $X$,

    \item \label{item:best-const-2}
     $\eta d(x,Tx) \leq d(x,y)$ implies $d(Tx,Ty) < d(x,y)$, for all $x,y\in X$,

    \item \label{item:best-const-3}
    condition \eqref{item:2-main} of Theorem \ref{thm:Geraghty-1}
    holds for any choice of initial point.
  \end{enumerate}
\end{thm}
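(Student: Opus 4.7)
My plan is to construct, for each $\eta > 1/2$, a complete metric space $(X, d)$ and a fixed-point-free map $T:X\to X$ out of an explicit tree, engineered so that the Suzuki-type hypothesis in (ii) becomes unsatisfiable on the ``bad'' pairs near the discontinuity of $T$.

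\emph{Construction.} Given $\eta > 1/2$, pick $\alpha \in (\max\{0,\, 1/\eta - 1\},\, 1)$ (non-empty precisely because $\eta > 1/2$) and $\beta > 2 + \alpha$. Take $X = \{0,b\}\cup\{a_n : n \in \Z^+\}$, equipped with the graph metric of the tree whose edges are $\{0,a_n\}$ of length $\alpha^n$ for $n \geq 1$, $\{0,a_0\}$ of length $1$, and $\{a_0,b\}$ of length $\beta - 1$. Define $Ta_n = a_{n+1}$ ($n \geq 0$), $T0 = b$, and $Tb = a_0$. Then $T$ is fixed-point-free, which is property (i). Since $d(0, a_n) = \alpha^n \to 0$ while every other pairwise distance is bounded below by $\min(1, \beta-1)$, the unique accumulation point of $X$ is $0$, so $X$ is complete. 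Moreover, for any $x \in X$ the orbit $T^n x$ eventually enters $\{a_k : k \geq 1\}$ and then converges to $0$; hence any two subsequences $\{x_{p_n}\}, \{x_{q_n}\}$ satisfy $\delta_n \leq d(x_{p_n},0) + d(0,x_{q_n}) \to 0$, so condition \eqref{item:2-main} holds automatically.

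\emph{Verification of (ii).} I would verify the Suzuki-type implication by a case analysis. The ``bad'' pairs --- those for which $d(Tx,Ty) \geq d(x,y)$ --- are precisely $(a_n,\,0)$ and $(0,\,a_n)$ for $n \geq 0$, because in each of them the application of $T$ crosses the long edge $\{a_0,b\}$. For $(a_n,\,0)$ one has $d(a_n,Ta_n) = \alpha^n(1+\alpha)$ (or $1+\alpha$ when $n=0$) while $d(a_n,\,0) = \alpha^n$ (or $1$); the choice $\alpha > 1/\eta - 1$ gives $\eta(1+\alpha) > 1$, so $\eta d(a_n,Ta_n) > d(a_n,\,0)$ and the hypothesis fails. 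For $(0,\,a_n)$, $d(0,T0) = \beta > 2 > 1/\eta$, hence $\eta\beta > 1 \geq d(0,\,a_n)$, and the hypothesis fails again. On every remaining pair a short calculation yields $d(Tx,Ty) < d(x,y)$: pairs inside $\{a_k : k \geq 0\}$ scale by the factor $\alpha$, and the pairs $(0,b),(b,0),(b,a_n),(a_n,b)$ all reduce to the inequality $\beta > 2 + \alpha$.

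The main obstacle is this case analysis, especially ensuring that the tree's edge lengths force the Suzuki hypothesis to be vacuous exactly on the pairs where the conclusion would be violated by the ``wrap-around'' of $T$ at the discontinuity $0 \mapsto b$. The crucial algebraic input is the equivalence $\eta(1+\alpha) > 1 \iff \alpha > 1/\eta - 1$, which is solvable for $\alpha \in (0,1)$ if and only if $\eta > 1/2$; this reflects the sharpness of the threshold $1/2$ in Theorem~\ref{thm:Geraghty-1}.
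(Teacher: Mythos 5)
Your construction is correct, but it takes a genuinely different route from the paper. The paper simply recycles Suzuki's own example from \cite{Suzuki-2008}: it picks $r\in(1/\sqrt2,1)$ with $(1+r)^{-1}<\eta$, takes $X=\set{0,1}\cup\set{u_n}$ with $u_n=(1-r)(-r)^n$ on the real line, $T0=1$, $T1=u_0$, $Tu_n=u_{n+1}$, and then gets property \eqref{item:best-const-2} for free by citing Suzuki's verified implication $(1+r)^{-1}d(x,Tx)<d(x,y)\Rightarrow d(Tx,Ty)\leq rd(x,y)$; property \eqref{item:best-const-3} follows, as in your argument, because every orbit eventually lies in the null sequence $\set{u_n}$. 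You instead build a bespoke tree metric and do the entire case analysis by hand; I checked it and it goes through: pairs inside $\set{a_k}$ contract by the factor $\alpha$, the pairs involving $b$ reduce to $\beta>2+\alpha$ (the binding case being $(a_0,b)$), and on the genuinely bad pairs $(a_n,0)$ and $(0,a_n)$ the hypothesis $\eta d(x,Tx)\leq d(x,y)$ fails because $\eta(1+\alpha)>1$ and $\eta\beta>1$ respectively. What your version buys is self-containedness (no appeal to Suzuki's computation) and a transparent explanation of the threshold via the solvability of $\eta(1+\alpha)>1$ with $\alpha<1$; what the paper's version buys is brevity and an explicit link to the constant $\theta(r)=(1+r)^{-1}$ in Theorem \ref{thm:Suzuki-2008}. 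One small slip to repair: your claim that ``every other pairwise distance is bounded below by $\min(1,\beta-1)$'' is false, since $d(a_m,a_n)=\alpha^m+\alpha^n$ can be arbitrarily small; moreover ``unique accumulation point $0\in X$'' does not by itself imply completeness in general. The intended facts are nevertheless true here: each $a_n$ is isolated with radius $\alpha^n$, and for any $\e>0$ only finitely many points lie outside the ball $B(0,\e)$, so every Cauchy sequence is eventually constant or converges to $0$; state it that way and the completeness argument is airtight.
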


\begin{proof}
  We use the same method as in \cite[Theorem 4]{Suzuki-NA-2009}.
  Take $\eta\in(1/2,\infty)$ and choose $r\in(1/\sqrt2,1)$ such that
  $(1+r)^{-1}<\eta$. For $n\in\Z^+$, let $u_n=(1-r)(-r)^n$,
  and then set $X=\set{0,1}\cup\set{u_n:n\in\Z^+}$.
  Define a mapping $T$ on $X$ by $T0=1$, $T1=u_0$ and $Tu_n=u_{n+1}$
  for $n\in\Z^+$. Obviously $T$ has no fixed point in $X$ and thus
  \eqref{item:best-const-1} is proved. We now prove
  part \eqref{item:best-const-2}.
  In \cite{Suzuki-2008}, Suzuki showed the following
  \[
    \forall\, x,y\in X\
    \bigprn{(1+r)^{-1}d(x,Tx) < d(x,y) \ \Longrightarrow \
    d(Tx,Ty) \leq rd(x,y)}.
  \]

  \noindent
  Now, if $\eta d(x,Tx) \leq d(x,y)$ then
  $(1+r)^{-1} d(x,Tx) < d(x,y)$
  and thus $d(Tx,Ty) \leq rd(x,y)<d(x,y)$. This proves part \eqref{item:best-const-2}.
  Finally, we show that, in this setting,
  condition \eqref{item:2-main} of  Theorem \ref{thm:Geraghty-1} holds.
  Take an arbitrary element $x\in X$ as initial point and
  set $x_n=T^nx$, $n\in\N$. Then $\set{x_n:n\geq2}$ is a
  subsequence of $\set{u_n}$ and since $u_n\to0$ the sequence
  $\set{x_n}$ is Cauchy. Hence if $\set{x_{p_n}}$ and $\set{x_{q_n}}$
  are two subsequences of $\set{x_n}$ we have
  $d(x_{p_n},x_{q_n})\to0$.
\end{proof}

Using a similar method as in \cite{Geraghty-73}, we can easily
convert the sequential condition \eqref{item:2-main}
in Theorem \ref{thm:Geraghty-1} to the more customary
functional form. Following \cite{Geraghty-73}, we define a class of test
functions as follows:

\begin{dfn}
  We say that $\psi$ is of class $\PSI$, written $\psi\in\PSI$, if
  $\psi$ is a function of $\R^+$ into $[0,1]$ and,
  for every sequence $\set{s_n}$ of positive numbers,
  the condition $\psi(s_n)\to1$ implies that $s_n\to0$.
\end{dfn}

We do not assume that $\psi$ is continuous in any sense.
We only require that if $\psi$ gets near one, it does so only near
zero, \cite{Geraghty-73}.

\begin{thm}\label{thm:Geraghty-2}
  Let $X$ be a complete metric space and let $T$ be a mapping on $X$
  satisfying \eqref{eqn:suzuki-type-contractive-map}. For any $x\in X$,
  the following statements, for the sequence $x_n=T^n x$, $n\in\N$,
  are equivalent:
  \begin{enumerate}[\upshape(i)]
    \item \label{item:1-PSI}
    $x_n\to z$ in $X$, with $z$ the unique fixed point of $T$;

    \item \label{item:2-PSI}
    there is $\psi\in\PSI$
    such that,  for all $m,n\in \N$,
  \begin{equation}\label{eqn:eta-psi-contraction}
    d(x_n,Tx_n)\leq d(x_n,x_m)\ \Longrightarrow\
    d(Tx_n,Tx_m)\leq \psi(d(x_n,x_m))d(x_n,x_m).
  \end{equation}
  \end{enumerate}
\end{thm}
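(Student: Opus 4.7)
The plan is to invoke Theorem \ref{thm:Geraghty-1} and reduce the claim to showing that the functional condition \eqref{item:2-PSI} is equivalent to the sequential condition \eqref{item:2-main} of that theorem; since \eqref{item:1-PSI} and \eqref{item:1-main} are the same statement, this will suffice.

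For \eqref{item:2-PSI} $\Rightarrow$ \eqref{item:1-PSI}, I would take any subsequences $\set{x_{p_n}}, \set{x_{q_n}}$ with $d(x_{p_n},Tx_{p_n})\leq\de_n$ and $\De_n\to1$. Since $\De_n\to 1$ we ultimately have $\de_n>0$, and then applying \eqref{eqn:eta-psi-contraction} to $(p_n,q_n)$ gives $\De_n\leq\psi(\de_n)\leq 1$. Hence $\psi(\de_n)\to 1$, and membership of $\psi$ in $\PSI$ forces $\de_n\to0$. This is condition \eqref{item:2-main} of Theorem \ref{thm:Geraghty-1}, which yields \eqref{item:1-PSI}.

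For the converse, following Geraghty \cite{Geraghty-73} I would define, for $s>0$,
\[
  \psi(s)=\sup\Bigset{\frac{d(Tx_n,Tx_m)}{s} : n,m\in\N,\ d(x_n,x_m)=s,\ d(x_n,Tx_n)\leq s}
\]
(with $\psi(s)=0$ when the set is empty, and $\psi(0)=0$). Any such pair $(n,m)$ satisfies $\tfrac12 d(x_n,Tx_n)\leq\tfrac12 s<s$, so \eqref{eqn:suzuki-type-contractive-map} yields $d(Tx_n,Tx_m)<s$; hence each ratio lies in $[0,1)$, $\psi$ takes values in $[0,1]$, and the implication in \eqref{eqn:eta-psi-contraction} holds by construction.

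The main obstacle is verifying $\psi\in\PSI$. I would argue by contradiction: suppose $\psi(s_n)\to1$ while $s_n\geq\eta>0$ along a subsequence. By the supremum definition, for each $n$ pick $(p_n,q_n)$ with $d(x_{p_n},x_{q_n})=s_n$, $d(x_{p_n},Tx_{p_n})\leq s_n$, and $\De'_n:=d(Tx_{p_n},Tx_{q_n})/s_n\geq\psi(s_n)-1/n$, so $\De'_n\to1$. Since $\set{x_k}$ is Cauchy there exists $K$ with $d(x_k,x_l)<\eta$ for all $k,l\geq K$; hence $\min(p_n,q_n)<K$ for every $n$, and a double pigeonholing extracts a subsequence on which one of the indices equals a fixed value $a$, while the other is either also a fixed value $b$ or tends to $\infty$. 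In the constant case the ratio $\De'_n$ is a single number forced to equal $1$, so $d(Tx_a,Tx_b)=d(x_a,x_b)=s$, contradicting \eqref{eqn:suzuki-type-contractive-map} at $(x_a,x_b)$ (or at $(x_b,x_a)$, depending on which index carries the constraint $d(x_\cdot,Tx_\cdot)\leq s$). In the remaining case the varying index $k$ satisfies $x_k\to z$ and $Tx_k=x_{k+1}\to z$, so passing to the limit in $\De'_n$ yields $d(Tx_a,z)=d(x_a,z)\geq\eta>0$, which contradicts \eqref{eqn:suzuki-type-contractive-map} applied at $(x_a,z)$ (or at $(z,x_a)$, where $\tfrac12 d(z,Tz)=0<d(z,x_a)$).
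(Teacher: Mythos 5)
Your proposal is correct, and the first direction (condition \eqref{item:2-PSI} implies \eqref{item:2-main} of Theorem \ref{thm:Geraghty-1}, hence \eqref{item:1-PSI}) coincides with the paper's argument. The converse, however, takes a genuinely different route. The paper defines $\psi(s)$ as the supremum of the ratios $d(Tx_n,Tx_m)/d(x_n,x_m)$ over all pairs with $d(x_n,Tx_n)\leq s\leq d(x_n,x_m)$, and verifies $\psi\in\PSI$ by feeding near-supremum pairs back into the sequential condition \eqref{item:2-main}: those pairs have $\De_n\to1$, so $\de_n\to0$, and since $s_n\leq\de_n$ by the very form of the constraint, $s_n\to0$. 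Thus the paper really proves the equivalence of the two ``(ii)'' conditions, without using completeness or the limit point $z$ at that stage. You instead define $\psi(s)$ using the equality $d(x_n,x_m)=s$ and verify $\psi\in\PSI$ directly from statement \eqref{item:1-PSI}: Cauchyness forces one index of each near-supremum pair below a fixed bound, a double pigeonhole reduces to a constant pair or to a pair $(x_a,x_{k_n})$ with $k_n\to\infty$, and in either case condition \eqref{eqn:suzuki-type-contractive-map} (applied at $(x_a,x_b)$, at $(x_a,z)$, or at $(z,x_a)$ using $Tz=z$) contradicts the limiting equality of distances. This is logically sufficient, since you only need the implication \eqref{item:1-PSI} $\Rightarrow$ \eqref{item:2-PSI}, and your verification is more self-contained (it does not route back through Theorem \ref{thm:Geraghty-1}); what it gives up is the slightly stronger fact, implicit in the paper, that \eqref{item:2-PSI} is equivalent to the sequential condition \eqref{item:2-main} independently of convergence of the orbit, and it is somewhat longer because the equality constraint $d(x_n,x_m)=s$ deprives you of the cheap bound $s_n\leq\de_n$ that makes the paper's reduction immediate. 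Minor points to make explicit in a final write-up: the defining set for $\psi(s_n)$ is nonempty for large $n$ (else $\psi(s_n)=0$), and after the pigeonholing you also fix, along a further subsequence, which coordinate of the pair carries the constraint $d(x_{p_n},Tx_{p_n})\leq s_n$ — you indicate this parenthetically, and indeed the application at $(z,x_a)$ works in all cases.
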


\begin{proof}
  It suffices to show that condition \eqref{item:2-PSI} of the theorem is
  equivalent to condition \eqref{item:2-main} of
  Theorem \ref{thm:Geraghty-1}. First assume that $\psi\in\PSI$
  exists and satisfies \eqref{eqn:eta-psi-contraction}.
  Let $\set{x_{p_n}}$ and $\set{x_{q_n}}$ be subsequences of $\set{x_n}$
  such that $d(x_{p_n},Tx_{p_n})\leq d(x_{p_n},x_{q_n})$.
  Assume that $\De_n\to1$. Since
  $d(x_{p_n},Tx_{p_n})\leq d(x_{p_n},x_{q_n})$,
  condition \eqref{eqn:eta-psi-contraction} shows that
  $\psi(\de_n)\to1$. Since $\psi\in\PSI$, we have
  $\de_n\to0$.

  Next assume that the sequential condition \eqref{item:2-main}
  of Theorem \eqref{thm:Geraghty-1} holds.
  Define $\psi:\R^+\to[0,1]$ as follows:
  Given $s\in\R^+$, if there exist no $m,n\in\N$ for which
  $d(x_n,Tx_n) \leq s \leq d(x_n,x_m)$,
  define $\psi(s)=0$; otherwise define
  \[
    \psi(s) =\sup\Bigset{
      \frac{d(Tx_n,Tx_m)}{d(x_n,x_m)}: d(x_n,Tx_n) \leq s \leq d(x_n,x_m)}.
  \]

  \noindent
  Since $T$ satisfies condition \eqref{eqn:suzuki-type-contractive-map},
  we have $0\leq \psi(s)\leq 1$, for every $s$.
  Assume that $\psi(s_n)\to1$ for some sequence $\set{s_n}$ in $\R^+$.
  Take a sequence $\set{r_n}$ of positive
  numbers such that $r_n<\psi(s_n)$ and $r_n\to1$.
  Then, there exist two subsequences $\set{x_{p_n}}$ and $\set{x_{q_n}}$ for which
  $d(x_{p_n},Tx_{p_n}) \leq s_n \leq d(x_{p_n},x_{q_n})$ and
  \[
     r_n<\frac{d(Tx_{p_n},Tx_{q_n})}{d(x_{p_n},x_{q_n})}
       \leq \psi(s_n).
  \]

  \noindent
  Therefore, $\De_n\to1$ and condition \eqref{item:2-main} of Theorem
  \ref{thm:Geraghty-1} shows $\de_n\to0$.
\end{proof}

We now apply the above results to obtain a criterion for convergence of the
iteration from an arbitrary starting point.

\begin{thm}\label{thm:Geraghty-3}
  Let $T$ be a mapping on a complete metric space $X$.
  Assume that, for some $\psi\in\PSI$, we have
  \begin{equation}
    \forall\, x,y\in X\
    \Bigprn{\frac12d( x,Tx) < d(x,y)\ \Longrightarrow\
    d(Tx,Ty) < \psi(d(x,y))d(x,y)}.
  \end{equation}
  Then for any choice of initial point $x$, the iteration
  $x_n=T^n x$, $n\in\N$, converges to the unique fixed point $z$ of
  $T$ in $X$.
\end{thm}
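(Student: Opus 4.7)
The plan is to reduce this theorem to Theorem~\ref{thm:Geraghty-2} by verifying its sequential condition \eqref{item:2-PSI} for the iterates $x_n = T^n x$, using the very $\psi$ provided by the hypothesis. First I would check that the standing assumption \eqref{eqn:suzuki-type-contractive-map} of Theorem~\ref{thm:Geraghty-2} is satisfied: since $\psi$ takes values in $[0,1]$, for $x,y\in X$ with $x\neq y$ the assumption $\frac12 d(x,Tx) < d(x,y)$ immediately yields
\[
  d(Tx,Ty) < \psi(d(x,y))\,d(x,y) \leq d(x,y),
\]
which is condition \eqref{eqn:suzuki-type-contractive-map}.

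Next I would verify \eqref{eqn:eta-psi-contraction} for every $m,n\in\N$. If $x_n = Tx_n$ for some $n$, then the iteration stabilizes at $x_n$; the sequence converges to this fixed point, which is unique by the argument in Theorem~\ref{thm:Geraghty-1}, and nothing more needs to be shown. Otherwise $d(x_n,Tx_n)>0$ for every $n$. Fix $m,n\in\N$ with $d(x_n,Tx_n)\leq d(x_n,x_m)$. Then $d(x_n,x_m)\geq d(x_n,Tx_n)>0$, so
\[
  \frac12 d(x_n,Tx_n) < d(x_n,Tx_n) \leq d(x_n,x_m),
\]
and the hypothesis of the theorem applies, giving $d(Tx_n,Tx_m) < \psi(d(x_n,x_m))\,d(x_n,x_m)$, which is strictly stronger than what \eqref{eqn:eta-psi-contraction} demands. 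Invoking Theorem~\ref{thm:Geraghty-2} then yields that $x_n$ converges to the unique fixed point $z$ of $T$.

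The argument is essentially a direct substitution into the machinery already developed; the only minor subtlety is that to upgrade the weak inequality $d(x_n,Tx_n) \leq d(x_n,x_m)$ to the strict inequality $\frac12 d(x_n,Tx_n)<d(x_n,x_m)$ demanded by the hypothesis, one must know that $d(x_n,Tx_n)>0$. This is why the trivial case in which some iterate is already a fixed point needs to be separated out at the start. No other obstacle is anticipated.
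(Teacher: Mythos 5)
Your proposal is correct: the paper states Theorem \ref{thm:Geraghty-3} without an explicit proof, presenting it as a direct application of the preceding results, and your reduction to Theorem \ref{thm:Geraghty-2} --- checking \eqref{eqn:suzuki-type-contractive-map} from $\psi\leq 1$, separating the trivial case where some iterate is already a fixed point, and then verifying \eqref{eqn:eta-psi-contraction} with the given $\psi$ --- is exactly that intended argument. The only point worth noting is that your care about upgrading $d(x_n,Tx_n)\leq d(x_n,x_m)$ to the strict inequality $\frac12 d(x_n,Tx_n)<d(x_n,x_m)$ is well placed and handled correctly.
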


\section{Metric Completion}

In this section, we discuss the metric completeness.

\begin{thm}
  Let $(X,d)$ be a metric space. Then $X$ is complete if and only if
  every mapping $T:X\to X$ satisfying the following two conditions
  has a fixed point in $X$;
  \begin{enumerate}[\upshape(i)]
    \item\label{item:1-complete}
     There exists a constant $\eta\in(0,1/2]$ such that
     $\eta d(x,Tx) < d(x,y)$ implies $d(Tx,Ty) < d(x,y)$, for all $x,y\in X$.

    \item\label{item:2-complete}
    There exists a point $x\in X$ such that condition \eqref{item:2-main}
    of Theorem \ref{thm:Geraghty-1} holds.
  \end{enumerate}
\end{thm}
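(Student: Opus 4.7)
The forward direction is immediate from Theorem \ref{thm:Geraghty-1}. If $X$ is complete and $T$ satisfies (i) with some $\eta\in(0,1/2]$ together with (ii), then since $\eta\le 1/2$ the hypothesis $\tfrac12 d(x,Tx)<d(x,y)$ forces $\eta d(x,Tx)<d(x,y)$, and (i) yields $d(Tx,Ty)<d(x,y)$; that is, $T$ obeys \eqref{eqn:suzuki-type-contractive-map}. Applying Theorem \ref{thm:Geraghty-1} with the starting point supplied by (ii) then produces the unique fixed point of $T$.

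For the converse I will argue contrapositively: assuming $X$ is not complete, I will exhibit a fixed-point-free $T$ satisfying both (i) and (ii). Let $\hat X$ be the completion of $X$ and pick $v\in\hat X\setminus X$, so that $f(x):=d(x,v)>0$ for every $x\in X$. Fix $\eta=1/2$ and choose $r\in(0,1)$ so small that $\eta(1-r)^2>2r$ (for instance $r=1/6$). Using density of $X$ in $\hat X$, for each $x\in X$ select $Tx\in X$ with $f(Tx)<r\,f(x)$; then $Tx=x$ would force $f(x)<r\,f(x)<f(x)$, so $T$ has no fixed point.

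Condition (ii) is automatic for every starting $x$, since $f(x_n)<r^nf(x)\to 0$ implies $x_n=T^nx\to v$ in $\hat X$, making $\{x_n\}$ Cauchy in $X$ and $\delta_n\to 0$ for every pair of subsequences. To verify (i), assume $\eta d(x,Tx)<d(x,y)$; the triangle inequality in $\hat X$, together with $f(y)\le f(x)+d(x,y)$, gives
\[
  d(Tx,Ty)\le d(Tx,v)+d(v,Ty) < r f(x)+rf(y) \le 2rf(x)+r\,d(x,y).
\]
Since $d(x,Tx)\ge f(x)-f(Tx)>(1-r)f(x)$, the hypothesis yields $f(x)<d(x,y)/(\eta(1-r))$, hence
\[
  d(Tx,Ty)<\Bigprn{\frac{2r}{\eta(1-r)}+r}\,d(x,y),
\]
and the bracketed constant is $<1$ by the choice $\eta(1-r)^2>2r$. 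This $T$ contradicts the hypothesis that every such map has a fixed point, so $X$ must be complete. The main obstacle is the quantitative balancing of $r$ against $\eta$ so that the final coefficient lies strictly below $1$; everything else (density of $X$ in $\hat X$, the geometric decay of $f(x_n)$, and the triangle inequalities around $v$) is routine.
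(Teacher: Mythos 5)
Your proof is correct. The paper states this theorem without its own proof, but the proof it gives for the companion result (Theorem \ref{thm:Completeness @ Geraghty}) rests on exactly the construction you use: on an incomplete $X$, send each point to another point much closer to the ``missing limit'' of a non-convergent Cauchy sequence, so that no fixed point can exist while the contractive condition and the sequential condition (ii) still hold. The differences are presentational rather than substantive. The paper avoids invoking the completion by working with $\rho(x)=\lim_n d(x,u_n)$ (your $f(x)=d(x,v)$ is the same function), fixes the shrinking factor at $1/7$, forces $Tx$ to lie on the sequence $\set{u_n}$ itself (which is how it gets condition (ii), via $\set{x_n}$ being a subsequence of a Cauchy sequence), and verifies the contractive implication by splitting into the cases $2\rho(x)\le\rho(y)$ and $\rho(y)<2\rho(x)$. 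You instead keep the factor $r$ general subject to $\eta(1-r)^2>2r$ and get the single uniform estimate $d(Tx,Ty)<\bigl(\tfrac{2r}{\eta(1-r)}+r\bigr)d(x,y)$, with (ii) following from the geometric decay $f(x_n)<r^nf(x)\to0$; this trades the case analysis for a small amount of quantitative bookkeeping and is, if anything, cleaner. Your forward direction (reducing (i) with $\eta\le 1/2$ to condition \eqref{eqn:suzuki-type-contractive-map} and citing Theorem \ref{thm:Geraghty-1}) is exactly what is intended. Both arguments are sound.
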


\begin{thm}\label{thm:Completeness @ Geraghty}
    For a metric space $(X, d\,)$, the following are equivalent:
  \begin{enumerate}[\upshape(i)]
    \item \label{item:X is complete @ Geraghty}
    The space $X$ is complete.

    \item \label{item:Geraghty-condition}
    For any mapping $T$ on $X$ that satisfies \eqref{eqn:suzuki-type-contractive-map},
    conditions \eqref{item:1-main} and \eqref{item:2-main} of Theorem \ref{thm:Geraghty-1} are
    equivalent.
  \end{enumerate}
\end{thm}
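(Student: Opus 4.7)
The forward implication \eqref{item:X is complete @ Geraghty}$\Rightarrow$\eqref{item:Geraghty-condition} is immediate from Theorem~\ref{thm:Geraghty-1}, which asserts precisely the claimed equivalence whenever $X$ is complete; there is nothing more to do in this direction.

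For the converse I would argue the contrapositive: in any incomplete $X$, I will construct a mapping $T:X\to X$ satisfying \eqref{eqn:suzuki-type-contractive-map} for which condition \eqref{item:2-main} of Theorem~\ref{thm:Geraghty-1} holds at some initial point while \eqref{item:1-main} fails. Let $\hat X$ be the completion of $X$, pick $w\in\hat X\setminus X$ (possible by incompleteness), and fix a small constant $r$ (I expect $r<1/7$ to suffice; I will take $r=1/10$). Since $w$ lies in the closure of $X$, for each $x\in X$ the ball $B_{\hat X}\bigl(w,r\,d(x,w)\bigr)$ intersects $X$ nontrivially, so by the axiom of choice one can select $T(x)\in X$ with
\[
  0<d(Tx,w)<r\,d(x,w).
\]

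The main obstacle I anticipate is verifying that this $T$ satisfies \eqref{eqn:suzuki-type-contractive-map}. Given $x,y\in X$ with $\tfrac12 d(x,Tx)<d(x,y)$, the triangle inequality yields
\[
  d(x,Tx)>(1-r)\,d(x,w),\qquad d(Tx,Ty)<r\bigl(d(x,w)+d(y,w)\bigr),
\]
so in particular $d(x,y)>\tfrac{1-r}{2}\,d(x,w)$. The desired inequality $d(Tx,Ty)<d(x,y)$ then follows from a short case analysis on the size of $d(y,w)$ relative to $d(x,w)$: (A) $d(y,w)\le d(x,w)$; (B1) $d(y,w)\ge 2d(x,w)$, where one uses $d(x,y)\ge d(y,w)-d(x,w)\ge \tfrac12 d(y,w)$; and (B2) the intermediate range $d(x,w)<d(y,w)<2d(x,w)$, where one uses the lower bound on $d(x,y)$ just derived. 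Each regime produces a linear inequality in $r$, and the binding constraint (coming from the intermediate regime B2) is what forces $r<1/7$.

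Once \eqref{eqn:suzuki-type-contractive-map} is established, the counterexample is immediate: for any $x_0\in X$ the iterates satisfy $d(T^n x_0,w)<r^n d(x_0,w)\to 0$, so $\{T^n x_0\}$ is Cauchy in $X$ yet has no limit in $X$, making \eqref{item:1-main} fail at $x_0$; in fact $T$ has no fixed point in $X$ at all, since $Tz=z$ would force $d(z,w)<r\,d(z,w)$. On the other hand, \eqref{item:2-main} holds vacuously, because every pair of subsequences of a Cauchy sequence satisfies $\delta_n\to 0$ automatically. This exhibits the desired failure of \eqref{item:Geraghty-condition} and completes the contrapositive.
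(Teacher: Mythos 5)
Your proof is correct and is essentially the paper's own argument in a different dress: the paper takes a non-convergent Cauchy sequence $\set{u_n}$, sets $\rho(x)=\lim_n d(x,u_n)$ (which is exactly your $d(x,w)$ measured in the completion), and defines $Tx=u_m$ with $\rho(Tx)<\rho(x)/7$, then verifies \eqref{eqn:suzuki-type-contractive-map} by the same two-regime comparison of $\rho(y)$ with $2\rho(x)$ that underlies your cases and yields the same threshold $1/7$. The only cosmetic differences are that you pass to the completion and allow $Tx$ to be any point of $X$ near $w$ (your case B2 is the paper's second case, and your ``vacuously'' should read ``trivially,'' since the conclusion $\de_n\to0$ always holds for subsequences of a Cauchy sequence).
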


\begin{proof}
  $\eqref{item:X is complete @ Geraghty}\Rightarrow\eqref{item:Geraghty-condition}$
  follows from Theorem \ref{thm:Geraghty-1}. To prove
  $\eqref{item:Geraghty-condition}\Rightarrow\eqref{item:X is complete @ Geraghty}$,
  towards a contradiction, let the metric space $X$ be incomplete.
  Then, as in the proof of Theorem 4 in \cite{Suzuki-2008}, there
  exists a Cauchy sequence $\set{u_n}$ which does not converge. Define a function
  $\rho:X\to\R^+$ by $\rho(x) = \lim_n d(x,u_n)$, for $x\in X$. Note that $\rho$
  is well-defined because $\set{d(x,u_n)}$ is a Cauchy sequence in $\R$, for every
  $x\in X$. The following are obvious:

  \begin{itemize}
    \item $\rho(x)-\rho(y)\leq d(x, y)\leq \rho(x) + \rho(y)$,  for $x, y\in X$,

    \item $\rho(x) > 0$ for all $x\in X$,

    \item $\rho(u_n)\to0$ as $n\to\infty$.
  \end{itemize}

  \noindent
  Define a mapping $T:X\to X$ as follows: For each $x\in X$, since $\rho(x) > 0$
  and $\rho(u_n)\to0$, there exists $m\in\N$ such that
  \begin{equation}\label{eqn:rho(un)<1/7rho(x)}
     \rho(u_n) < \frac{\rho(x)}7,
     \qquad (n\geq m).
  \end{equation}
  Put $T(x)=u_m$. In case $x=u_k$, for some $k$, we choose $m$ large enough such that
  $m>k$ and \eqref{eqn:rho(un)<1/7rho(x)} holds.
  It is obvious that $\rho(Tx)<\rho(x)/7$ so that
  $Tx\neq x$, for every $x\in X$. That is, $T$ does not have a fixed point.
  Let us prove that $T$ satisfies \eqref{eqn:suzuki-type-contractive-map}.
  Fix $x, y\in X$ with $(1/2)d(x, Tx) < d(x, y)$. In the case where
  $2 \rho(x)\leq\rho(y)$, we have
  \begin{align*}
    d(Tx,Ty)
     & \leq \rho(Tx) + \rho(Ty)
       < \frac13 \bigprn{\rho(x) + \rho(y)} \\
     & \leq \frac13 \bigprn{\rho(x) + \rho(y)} + \frac23 \bigprn{\rho(y) - 2 \rho(x)} \\
     & = \rho(y) - \rho(x) \leq d(x,y).
  \end{align*}

  \noindent
  In the other case, where $\rho(y)<2 \rho(x)$, we have
  \begin{align*}
    d(x,y) > \frac12 d(x,Tx) \geq \frac12\bigprn{\rho(x)-\rho(Tx)}
      \geq \frac12\Bigprn{1-\frac17}\rho(x)=\frac37\rho(x).
  \end{align*}

  \noindent
  Therefore,
  \begin{align*}
    d(Tx,Ty)
     & \leq \rho(Tx) + \rho(Ty) < \frac17\bigprn{\rho(x)+\rho(y)} \\
     & \leq \frac17\bigprn{\rho(x)+2\rho(x)} = \frac37\rho(x) \leq d(x,y).
  \end{align*}

  \noindent
  Finally, we show that, given $x\in X$, condition \eqref{item:2-main} of Theorem \ref{thm:Geraghty-1}
  holds for the iteration sequence $x_n=T^nx$, $n\in\N$.
  The definition of $T$ shows that there exists a sequence
  $\set{m_n}$ of positive integers such that $m_n<m_{n+1}$
  and $x_n=u_{m_n}$. Hence $\set{x_n}$ is a subsequence of
  $\set{u_n}$. Now, if $\set{x_{p_n}}$ and $\set{x_{q_n}}$ are subsequences
  of $\set{x_n}$, they are also subsequences of $\set{u_n}$ and thus
  $d(x_{p_n},x_{q_n})\to0$ because $\set{u_n}$ is a Cauchy sequence.
  This shows that condition \eqref{item:2-main} of Theorem \ref{thm:Geraghty-1}
  holds for the sequence $\set{x_n}$.
  This is a contradiction since condition
  \eqref{item:1-main} of Theorem \ref{thm:Geraghty-1}
  does not hold for the sequence $\set{x_n}$.
\end{proof}





\end{document}